\newtheorem{Lemma}{Lemma}[section]
\newtheorem{Definition}[Lemma]{Definition}
\newtheorem{Proposition}[Lemma]{Proposition}
\def\qS{\mathscr{S}}
\def\YM{\mathrm{YM}}
\def\SM{\mathrm{SM}}
\def\ad{\mathrm{ad}}
\def\C{\mathbb{C}}
\def\S{\mathbb{S}}
\def\f{\mathfrak{f}}
\def\qGG{\mathfrak{qGG}}
\def\Hor{\mathrm{Hor}}
\def\dvol{\mathrm{dvol}}
\def\id{\mathrm{id}}
\def\Ker{\mathrm{Ker}}
\def\inv{\mathrm{inv}}
\def\H{\mathrm{H}}
\def\Mor{\textsc{Mor}}
\def\N{\mathbb{N}}
\def\Z{\mathbb{Z}}
\def\triv{\mathrm{triv}}
\def\l{\mathrm{L}}
\def\r{\mathrm{R}}
\def\Id{\mathrm{Id}}
\def\c{\mathrm{c}}
\def\r{\mathrm{R}}
\def\SU{\mathcal{SU}}
\def\T{\mathcal{T}}
\begin{document}
\date{\today}
\title{Yang--Mills--Scalar--Matter Fields in the Quantum Hopf Fibration}
\author{Gustavo Amilcar Salda\~na Moncada}
\address{Gustavo Amilcar Salda\~na Moncada\\
Instituto de Matem\'aticas UNAM}
\email{gamilcar@ciencias.unam.mx}
\begin{abstract}
In this paper we present solutions to the {\it non--commutative geometrical} version of the Yang--Mills--Scalar--Matter theory in the Hopf fibration $S^1\hookrightarrow S^3\rightarrow S^2$ using the $3D$--calculus.
 \begin{center}
  \parbox{300pt}{\textit{MSC 2010:}\ 46L87, 58B99.}
  \\[5pt]
  \parbox{300pt}{\textit{Keywords:}\ Fields, quantum Hopf fibration, canonical quantum principal connection}
 \end{center}
\end{abstract}
\maketitle

\section{Introduction}

In Differential Geometry, the Hopf fibration is perhaps maybe one of the most basic and well--established examples of principal bundles. This bundle is particularly important in Physics since its connections can describe magnetic monopoles. Even more, gauge theory allows us to study matter fields in the presence of magnetic monopoles. In Non--Commutative Geometry there is an analog concept known as the quantum Hopf fibration or the $q$--deformed Dirac monopole bundle \cite{bm}.

This paper aims to show in a concrete  example that the theory presented in \cite{sald1} and \cite{sald2} is non--trivial, i.e., we are going to present a {\it non--commutative geometrical version of magnetic monopoles and its interaction with space--time scalar matter fields}. Unlike examples shown in \cite{sald2} and \cite{sald3} where we used trivial quantum principal bundles; here we will use the quantum Hopf fibration which relates the structure group and the base space in a non--trivial way. The importance of this work lies in the fact it provides a better support of the general theory showing explicit and interesting solutions of the {\it field equations}.

To accomplish our purpose, we will consider the quantum Hopf fibration together with its $3D$--calculus, which represent the {\it non--commutative} counterpart of the {\it classical differential calculus}, but of course, it is possible to use others, for example, the $4D$--calculus like in \cite{lz}. It is worth remarking  (again) that the theory presented here is just an application of the general theory (\cite{sald1}, \cite{sald2}) not an {\it ad hoc} theory created for this space; and this work follows the research line of M. Durdevich shown in \cite{micho1}, \cite{micho2}and \cite{micho3}, among other papers. If the reader wants to check another concrete example of the general theory, we recommend \cite{sald3}.

The paper is organized into five sections. In the second section we are going to develop our study of the quantum Hopf fibration using the $3D$ $\ast$--first order differential calculus ($\ast$--FODC)  to creat the differential calculus in the quantum Hopf fibration shown in \cite{micho3}. This differential calculus arises from the {\it classical} differential geometry and like in that case, one can define a particular quantum principal connection (qpc) which will play the role of the {\it canonical} principal connection on the Hopf fibration. In the third section we will talk about the associated quantum vector bundles (associated qvb) as well as all the necessary conditions to work with Yang--Mills fields and space--time scalar matter fields \cite{sald1}, \cite{sald2}. In the fourth section we are going to present solutions to the fields equations in this space as well as the spectrum of the left and right Laplace-de Rham operator for qvbs \cite{sald2}. The last section is about some concluding comments. It is worth mentioning that this paper is not self--contained, so we strongly recommended to read \cite{micho3}, \cite{sald1}, \cite{sald2} before this paper.

\section{The quantum Hopf Fibration}

Let us take the compact matrix quantum group (cmqg) $\SU_q(2)$ (the quantum $SU(2)$ group \cite{woro2}) with $q$ $\in$ $(-1,1)-\{0\}$.  The dense $\ast$--algebra $SU_q(2)$ is generated by two letters $\{\alpha,\gamma\}$ and they satisfy
\begin{equation*}
\begin{aligned}
\alpha^{\ast}\alpha+\gamma^{\ast}\gamma=\mathbbm{1},\qquad \alpha\alpha^{\ast}+q^{2}\gamma\gamma^{\ast}=\mathbbm{1},\\
\gamma\gamma^{\ast}=\gamma^{\ast}\gamma,\quad q\gamma\alpha=\alpha\gamma, \quad q\gamma^{\ast}\alpha=\alpha\gamma^{\ast},
\end{aligned}
\end{equation*}
and its $\ast$--Hopf algebra structure is given by   
\begin{equation*}
\phi(\alpha)=\alpha\otimes\alpha-q\gamma^\ast\otimes\gamma, \qquad \phi(\gamma)=\gamma\otimes\alpha+\alpha^\ast\otimes\gamma,
\end{equation*}
\begin{equation*}
\epsilon(\alpha)=1, \qquad \epsilon(\gamma)=0,
\end{equation*}
\begin{equation*}
\kappa(\alpha)=\alpha^\ast, \quad \kappa(\alpha^\ast)=\alpha,\quad \kappa(\gamma)=-q\gamma,\quad \kappa(\gamma^\ast)=-q^{-1}\gamma^\ast.
\end{equation*}

In an abuse of notation, we will identify the group $U(1)$ with the Laurent polynomial algebra, i.e., $$U(1):=\C[z,z^\ast]=\C[z,z^{-1}]$$  and in this way, its $\ast$--Hopf algebra structure is defined by
\begin{equation*}
\phi'(z)=z\otimes z, \qquad \epsilon'(z)=1, \qquad \kappa'(z)=z^\ast, \qquad \kappa'(z^\ast)=z.
\end{equation*}
This algebra is commutative and $\kappa'$ is a $\ast$--algebra morphism.\\

By defining the $\ast$--Hopf algebra epimorphism
\begin{equation*}
j:SU_q(2) \longrightarrow U(1)
\end{equation*}
such that $j(\alpha)=z,$ $j(\gamma)=0$, we can consider
\begin{equation*}
{_{SU_q(2)}}\Phi:=(\id_{SU_q(2)}\otimes j)\circ \phi:SU_q(2)\longrightarrow SU_q(2)\otimes U(1).
\end{equation*}
Now we define the quantum $2$--sphere as (the quantum space whose $\ast$--algebra of $\C$--valued functions is given by) the $\ast$--subalgebra of $SU_q(2)$
\begin{equation*}
(\S^2_q,\cdot,\mathbbm{1},\ast),
\end{equation*}
where $$\S^2_q:=\{ x\in SU_q(2) \mid {_{SU_q(2)}}\Phi(x)=x\otimes \mathbbm{1} \}$$ which can be viewed as the $\ast$--algebra generated by $\{\alpha\alpha^\ast,\alpha\gamma^\ast\}$ and it is a quantum sphere in the sense of \cite{pod}. In this way, the quantum principal $U(1)$--bundle over $\S^2_q$ given by 
\begin{equation}
\label{f.1}
\zeta_{HF}=(SU_q(2),\S^2_q, {_{SU_q(2)}}\Phi)
\end{equation}
is usually called the {\it quantum Hopf fibration} \cite{bm}, \cite{micho2}.

Now to accomplish our purpose we will take the differential calculus on $\zeta_{HF}$ shown \cite{micho3}; however, we are going to use the notation presented in \cite{sald1}, \cite{sald2}. The bicovariant $\ast$--FODC of $U(1)$ will be denoted by $(\Gamma,d)$ and we shall take 
\begin{equation}
\label{f.2}
\beta':=\{\varsigma:=\pi'(z-z^\ast)\}
\end{equation}
as a Hamel basis of ${_\inv}\Gamma$. In this case, the quantum germs map and the adjoint corepresentation will be denoted by $\pi':U(1)\longrightarrow {_\inv}\Gamma$ and $ \ad': {_\inv}\Gamma \longrightarrow {_\inv}\Gamma\otimes U(1)$, respectively. For this space, the universal differential envelope $\ast$--calculus of $(\Gamma,d)$ (\cite{micho1}), $(\Gamma^\wedge,d,\ast)$, satisfies $$\Gamma^{\wedge k}=\{0\}\;\;\mbox{ for }\;\;k\geq 2;$$ however, it differs from the {\it classical} differential calculus on $U(1)$ \cite{micho3}.

\begin{Definition}
\label{2.1}
With the previous differential calculus, the linear map
\begin{equation*}
\begin{aligned}
\omega^\c:{_\inv}\Gamma &\longrightarrow \Omega^1(SU_q(2))\\
\theta &\longmapsto \mathbbm{1}\otimes \theta
\end{aligned}
\end{equation*}
is a real, regular and multiplicative quantum principal connection (qpc) and it is called the canonical quantum principal connection in $\zeta_{HF}$.
\end{Definition}

\begin{Proposition}
\label{2.2}
The only regular qpc is $\omega^\c$.
\end{Proposition}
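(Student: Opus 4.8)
The plan is to use the affine structure of the space of quantum principal connections, the one--dimensionality of ${_\inv}\Gamma$, and the fact that for the abelian structure group $U(1)$ the adjoint corepresentation $\ad'$ is trivial. First I would recall from \cite{sald1} that the qpcs on $\zeta_{HF}$ form an affine space modelled on the vector space of covariant tensorial $1$--forms, i.e. the $\ad'$--covariant linear maps $\lambda:{_\inv}\Gamma\to\Hor$ taking values in the horizontal $1$--forms of $SU_q(2)$. Thus every qpc can be written uniquely as $\omega=\omega^\c+\lambda$, and the proposition is equivalent to proving that $\omega^\c+\lambda$ is regular only when $\lambda=0$. Since $\beta'=\{\varsigma\}$ is a Hamel basis of ${_\inv}\Gamma$, such a $\lambda$ is completely determined by the single horizontal $1$--form $\lambda(\varsigma)$.

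Second, I would pin down $\lambda(\varsigma)$ using covariance. A direct computation from $\phi'(z)=z\otimes z$ gives $\ad'(\varsigma)=\varsigma\otimes\mathbbm{1}$, so $\ad'$ is trivial (as expected for an abelian group). The intertwining condition defining a tensorial form then reads $F_{\Hor}(\lambda(\varsigma))=\lambda(\varsigma)\otimes\mathbbm{1}$, i.e. $\lambda(\varsigma)$ is right--invariant; equivalently, under the $\Z$--grading induced by the right $U(1)$--coaction, $\lambda(\varsigma)$ lies in weight $0$, and being horizontal it is a base form, $\lambda(\varsigma)\in\Omega^1(\S^2_q)\subseteq\Omega^1(SU_q(2))$.

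Third, I would translate regularity. By the characterization in \cite{sald1,sald2}, $\omega$ is regular iff $\omega(\theta)\varphi=(-1)^{\partial\varphi}\sum\varphi^{(0)}\,\omega(\theta\circ\varphi^{(1)})$ for every horizontal $\varphi$, where $\circ$ is the right adjoint action dual to $\ad'$. Triviality of $\ad'$ yields $\varsigma\circ b=\epsilon(b)\varsigma$, so the counit axiom collapses the condition to the graded--commutation relation $\omega(\varsigma)\varphi=(-1)^{\partial\varphi}\varphi\,\omega(\varsigma)$. As $\omega^\c$ is already regular (Definition \ref{2.1}), subtracting the two identities gives $\lambda(\varsigma)\varphi=(-1)^{\partial\varphi}\varphi\,\lambda(\varsigma)$ for all horizontal $\varphi$; in degree $0$ this says precisely that $\lambda(\varsigma)$ commutes with every element of $SU_q(2)$.

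Finally, I would feed this into the explicit bimodule of the $3D$--calculus. Writing $\lambda(\varsigma)$ in the left--invariant basis and imposing the weight--$0$ and horizontality constraints of the second step leaves only the two horizontal ladder components, whose coefficients must carry weights $-2$ and $+2$; the centrality relation of the third step, together with Woronowicz's commutation rules between these generators and $\alpha,\gamma,\alpha^\ast,\gamma^\ast$ (which involve genuine $q$--factors since $q\neq\pm1$), then forces both coefficients to vanish, so $\lambda(\varsigma)=0$ and hence $\omega=\omega^\c$. The hard part will be exactly this last step: everything before it is purely structural, but here one must know the bimodule commutation relations of the $3D$--calculus concretely and verify that no nonzero horizontal right--invariant $1$--form is central. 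It is the noncommutativity of $SU_q(2)$ that eliminates every candidate, so this is where the specific geometry of $\zeta_{HF}$, rather than the general formalism, carries the argument.
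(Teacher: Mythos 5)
Your first two steps are sound and coincide with the paper's setup: connection displacements are tensorial, $\ad'$ is indeed trivial for $U(1)$, and this is exactly how one sees that $\lambda(\varsigma)\in\Omega^1(\S^2_q)$. The genuine gap is in your third step, namely the inference that ``triviality of $\ad'$ yields $\varsigma\circ b=\epsilon'(b)\varsigma$.'' The right $U(1)$--module structure $\circ$ on ${_\inv}\Gamma$ is \emph{not} determined by $\ad'$; it is determined by the bimodule structure of $\Gamma$, equivalently by the right ideal $R'\subseteq\Ker\,\epsilon'$ defining the calculus. The calculus $(\Gamma,d)$ used here is \emph{not} the classical calculus on $U(1)$ (the paper stresses this explicitly): it is the one induced by $j$ from the $3D$--calculus, whose ideal is generated by $(z-\mathbbm{1})(z-q^{2}\mathbbm{1})$, so that $\pi'(z^{2})=(1+q^{2})\pi'(z)$ and hence $\varsigma\circ z=q^{\pm2}\,\varsigma\neq\varsigma$ (the sign of the exponent depends on conventions). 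This has two fatal consequences for your argument. First, it is internally inconsistent: if regularity really collapsed to graded centrality, then $\omega^\c$ itself would \emph{fail} to be regular, because in the $3D$--calculus $\omega^\c(\varsigma)$ $q$--commutes with $\alpha$ and $\gamma$ (with a factor $q^{\mp2}$) rather than commuting with them; yet you invoke the regularity of $\omega^\c$ when subtracting the two identities. Second, the equation you end up solving, $\lambda(\varsigma)\varphi=(-1)^{\partial\varphi}\varphi\,\lambda(\varsigma)$, is simply not the regularity condition; even granting that it has only the zero solution, that tells you nothing about which qpcs are regular.

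What the paper actually uses is the twisted relation coming from the non--trivial $\circ$--action: $\lambda(\varsigma)\,\alpha=\alpha\,\lambda\bigl(\varsigma\circ{\kappa'}^{-1}(z)\bigr)=q^{-2}\alpha\,\lambda(\varsigma)$, and likewise $\lambda(\varsigma)\,\gamma=q^{-2}\gamma\,\lambda(\varsigma)$. Writing $\lambda(\varsigma)=x\eta_-+y\eta_+$ with $x$ a combination of the Hamel--basis monomials $\alpha^{m}\gamma^{k}\gamma^{\ast l}$, $m+k-l=2$, the relation against $\alpha$ forces $k+l=1$ (hence $m\geq0$), while the relation against $\gamma$ forces $m=-1$ --- a contradiction, so $x=0$, and similarly $y=0$. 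In other words, your outline (affine decomposition, reduction to $\Omega^1(\S^2_q)$, then a commutation computation in the $3D$--calculus) is the paper's, but the heart of the proof is precisely the non--trivial module structure you assumed away: without it the $q$--factors that produce the contradiction never appear, and the computation you propose in your last step is carried out against the wrong equation.
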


\begin{proof}
According to the general theory of qpcs we know that every regular qpc has to have the form \cite{stheve}
\begin{equation*}
\omega^\c+\lambda
\end{equation*}
such that $\varphi\,\lambda(\theta)= (-1)^k \,\lambda(\theta\circ {\kappa'}^{-1}(\varphi^{(1)}))\varphi^{(0)}$ for all $\varphi$ $\in$ $\Hor^{k} SU_q(2)$, $\theta$ $\in$ ${_\inv}\Gamma$ with ${_\H}\Phi(\varphi)=\varphi^{(0)}\otimes \varphi^{(1)}$ (in Sweedler's notation). We are going to prove that $\lambda=0$. A direct calculation shows that $${_\H}\Phi(\lambda(\varsigma))=({_\H}\Phi\otimes \id_{U(1)})ad'(\varsigma)\;\Longleftrightarrow\; \lambda(\varsigma)\,\in\,\Omega^1(\S^{2}_q);$$ so $\lambda(\varsigma)=x\eta_-+y\eta_+$ with $x=\displaystyle \sum_{m+k-l=2}\lambda_{mkl}\,\alpha^m\gamma^k\gamma^{\ast l}$, $y=\displaystyle\sum_{p+q-r=-2}\,\mu_{pqr}\alpha^p\gamma^q\gamma^{\ast r},$ $\lambda_{mkl}$, $\mu_{pqr}$ $\in$ $\C$ (these elements form a Hamel basis). Due to the fact that $\lambda$ has to satisfies $ \lambda(\varsigma)\,\alpha=\alpha \,\lambda(\varsigma\circ z)=q^{-2}\alpha\,\lambda(\varsigma),$ we get  $$x\eta_-\alpha=q^{-2}\alpha x \eta_-\;\Longrightarrow\; x=\sum_{m+k-l=2}\lambda_{mkl}\,\alpha^m\gamma^k\gamma^{\ast l}\,\mbox{ with }\, k+l=1,\; m\geq 0.$$ Applying the same process to $\gamma$ we find that $m=-1$ which is a contradiction, so $x=0$. A similar calculation shows $y=0$ and hence $\lambda=0$.
\end{proof}

A quick calculation  shows  that the operator $D$ presented in \cite{micho3} is the covariant derivative of $\omega^\c$  and its curvature fulfills 
\begin{equation}
\label{f.3}
R^{\omega^\c}(\varsigma)=(1+q^2)q\,\eta_-\eta_+.
\end{equation}

It is worth mentioning that for the form of the differential calculus, the only possible embedded differential (\cite{micho2}, \cite{stheve}) is $\delta=0$ and for $q=1$, $\omega^\c$ is the principal connection associated to the Levi--Civitta connection.\\

\section{Associated Quantum Vector Bundles and the Quantum Hodge Operator}

In accordance with the general theory of associated qvbs, we need to check that Equations 34 and 35 of \cite{sald1} hold.

It is well--know that a complete set of mutually inequivalent irreducible unitary finite dimensional (smooth) $U(1)$--corepresentations $\T$ is in biyection with $\Z$. These corepresentations are unitary with the canonical inner product of $\C$ and it is worth mentioning that for all $n$ $\in$ $\Z$, the matrix of the canonical corepresentation morphism between the corepresentation and its double contragradiant is the identity matrix (see Equation 35 of \cite{sald1}).

\begin{Proposition}
\label{3.1}
For every $n$ $\in$ $\Z$, Equations 34 and 35 of \cite{sald1} are satisfied.
\end{Proposition}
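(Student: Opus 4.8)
The plan is to verify, for each irreducible unitary $U(1)$-corepresentation labelled by $n \in \Z$, the two structural conditions (Equations 34 and 35 of \cite{sald1}) that are required to build the associated quantum vector bundle. Since the complete set of such corepresentations is in bijection with $\Z$ and each is one-dimensional on $\C$, the corepresentation $\T_n$ is given by $z \mapsto z^n$ (up to the identification of $U(1)$ with $\C[z,z^\ast]$). My strategy is to reduce both conditions to explicit statements about this family of characters and then check them directly using the $\ast$-Hopf algebra structure of $U(1)$ and the known decomposition of $SU_q(2)$ into isotypic components.

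First I would fix notation: for $n \in \Z$, the associated module is the space of intertwiners, equivalently the $n$-th isotypic piece of $SU_q(2)$ under ${_{SU_q(2)}}\Phi$, and I would recall that $SU_q(2)$ decomposes as a direct sum over $n$ of such pieces, each a finitely generated projective $\S^2_q$-module. The content of Equation 34 is a freeness/projectivity-type compatibility condition ensuring the associated quantum vector bundle is well-defined; I would verify it by exhibiting, for each $n$, an explicit finite generating family in $SU_q(2)$ transforming according to $z^n$ (built from monomials in $\alpha,\gamma,\gamma^\ast$ whose total $z$-weight equals $n$), and checking the required module relations hold. The computation here is governed entirely by the commutation relations among $\alpha,\gamma,\gamma^\ast$ and the coaction formula ${_{SU_q(2)}}\Phi = (\id \otimes j)\circ\phi$, so it is a bookkeeping exercise on weights.

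Next I would address Equation 35, which (as the preceding paragraph indicates) concerns the canonical corepresentation morphism between $\T_n$ and its double contragradient being the identity matrix. For one-dimensional corepresentations this is the most transparent part: I would compute the contragradient of $\T_n$ via the antipode $\kappa'$, note that $\kappa'(z)=z^\ast$ so that the double contragradient returns $z^n$ on the nose, and verify that the pairing used to define the canonical morphism is normalized so that its matrix is $1$. Because each $\T_n$ is unitary with respect to the standard inner product on $\C$ and is genuinely one-dimensional, the morphism is a scalar, and the normalization forces that scalar to be $1$.

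The main obstacle I expect is not Equation 35, which is essentially immediate in dimension one, but the careful verification of Equation 34 across all $n$ simultaneously, i.e. confirming uniformly that each isotypic component of $SU_q(2)$ satisfies the finite-generation and projectivity hypotheses demanded by the general framework. The difficulty is bookkeeping rather than conceptual: one must track how the $q$-commutation relations interact with the weight grading to produce the correct module structure, and handle the sign of $n$ (positive versus negative powers) symmetrically, mirroring the $x$ and $y$ computation already carried out in the proof of Proposition \ref{2.2}. I would therefore organize the argument by first treating a representative $n>0$, then indicating that $n<0$ follows by applying $\ast$ (which interchanges the roles of $\gamma$ and $\gamma^\ast$ and flips the weight), and finally noting that $n=0$ recovers the base algebra $\S^2_q$ itself, for which the conditions hold trivially.
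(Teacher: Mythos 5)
Your handling of Equation 35 rests on a misreading, and this is the genuine gap. In the paper (and in \cite{sald1}), the fact that the canonical corepresentation morphism between each $U(1)$--corepresentation and its double contragredient has matrix equal to the identity is a \emph{preliminary remark} stated before the proposition; it is not what Equation 35 asserts. Equation 35 is a completeness condition inside $SU_q(2)$: having chosen the intertwiners $T^n_{k+1}(1)=x^n_{k+1\,1}$, one must exhibit a matrix $Z^n$ such that, writing $X^n=(x^n_{k+1\,1})$ and $W^n=Z^nX^n$, one has $W^{n\,\mathrm{T}}X^{n\,\ast}=\Id$; concretely, for $n>0$ this reads $\sum_{k=0}^{n} q^{2k}\,x^n_{k+1\,1}\,x^{n\,\ast}_{k+1\,1}=\mathbbm{1}$. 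This is the ``right generators'' condition, and it is exactly where noncommutativity bites: the unweighted sum $\sum_k x^n_{k+1\,1}x^{n\,\ast}_{k+1\,1}$ is \emph{not} $\mathbbm{1}$ when $q\neq 1$, which is why the paper must produce the diagonal weight matrix $Z^n=(q^{2(i-1)}\delta_{ij})$ for $n>0$ and $Z^n=(q^{-2(|n|+1-i)}\delta_{ij})$ for $n<0$. Your one--line verification that the antipode squared returns $z^n$, so that the canonical morphism is the scalar $1$, establishes only the preliminary remark and says nothing about this identity; as written, Equation 35 is simply not checked.

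The same tendency to label the algebraic core as ``bookkeeping'' weakens your Equation 34 step. Monomials $\alpha^{n-k}\gamma^k$ of weight $n$ do span the correct intertwiner space, but Equation 34 requires the precise normalization by square roots of $q^{-2}$--binomial coefficients so that $\sum_{k=0}^{n} x^{n\,\ast}_{k+1\,1}x^n_{k+1\,1}=\mathbbm{1}$, and no amount of tracking weights under the coaction produces this relation --- it is a nontrivial identity among noncommuting generators. The paper obtains both it and the weighted identity above in one stroke by recognizing the normalized monomials as the first column (for $n>0$; last column for $n<0$) of the spin--$l$ representation matrix $u^l$, $l=|n|/2$, of $\SU_q(2)$, and invoking unitarity $u^{l\,\dagger}u^l=u^lu^{l\,\dagger}=\Id_{|n|+1}$ from \cite{m}. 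That input from the representation theory of $\SU_q(2)$ --- not commutation--relation bookkeeping --- is the actual content of the proof, and your proposal never reaches it.
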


\begin{proof}
By taking 
\begin{equation*}
\begin{aligned}
T^\triv :\C &\longrightarrow \S^2_q\\
w &\longmapsto w \mathbbm{1}
\end{aligned}
\end{equation*}
it follows that the statement is true for $n=0$.  Now let us take $n$ $\in$ $\N$ and consider the linear maps
\begin{equation*}
T^n_{k+1}:\C\longrightarrow \mathrm{SU}_q(2)
\end{equation*}
defined by
 $$T^n_{k+1}(1)=\left[\begin{array}{lcr}
  n  \\
k \\
\end{array}\right]^{\frac{1}{2}}_{q^{-2}}\alpha^{n-k}\gamma^k=:x^{n}_{k+1\,1} $$

\noindent with $k=0,...,n$, where $\displaystyle \left[\begin{array}{lcr}
  n  \\
k \\
\end{array}\right]_{q^{-2}}$ is the Gaussian binomial coefficient also known as the $q$--binomial coefficient \cite{m}. Due to the fact that ${_{\mathrm{SU}_q(2)}}\Phi(\alpha)=\alpha\otimes z,$ ${_{\mathrm{SU}_q(2)}}\Phi(\gamma)=\gamma\otimes z$ we get  $$T^{n}_k\;\in\;\Mor(n,{_{\mathrm{SU}_q(2)}}\Phi).$$ According to \cite{m}, these elements form the first column of the $\SU_q$--representation matrix for {\it spin} $l=\dfrac{n}{2}$, $u^l$. Since $u^{l\,\dagger}u^l=\Id_{n+1}$ $\in$ $M_{n+1}(\mathrm{SU}_q(2))$ (here $\dagger$ is denoting the transpose conjugate matrix) we get that Equation 34 holds. Taking $$Z^{n}=(q^{2(i-1)}\delta_{ij}) \;\in\; M_{n+1}(\C)$$ where $\delta_{ij}$ is the Kronecker delta,   Equation 35 holds since in this case $$W^{n\,\mathrm{T}}X^{n\,\ast}=\Id_{n_{\alpha}} \quad \mbox{ with }\quad  W^{n}=(w^{n}_{ij})=Z^{n}X^{n},\;X^{n}=(x^{n}_{k+1\,1}) $$ is the the $(1,1)$--entry of $u^lu^{l\dagger}=\Id_{n+1}$ \cite{m}. For negative integers $n$ it is enough to take the last column of $u^l$ with $l=\dfrac{|n|}{2}$ to ensure that the Equation 34 is holds and taking $$Z^{n}=(q^{-2(|n|+1-i)}\delta_{ij}) \;\in\; M_{|n|+1}(\C)$$ we get that Equation 35 holds since in this case $W^{n\,\mathrm{T}}X^{n\,\ast}=\Id_{n_{\alpha}}$ will be the $(|n|+1,|n|+1)$--entry of $u^lu^{l\dagger}=\Id_{|n|+1}$ \cite{m}.
\end{proof}

We have to remark that Equations 36 of \cite{sald1} can be viewed in terms of Hopf--Galois extension's theory \cite{qvbH}. However, an advantage of having proven Proposition \ref{3.1} is that it provides us with the left and right generators of the associated qvbs \cite{sald1}, \cite{sald2}. In this way, it is possible to take left and right associated quantum vector bundles (qvbs) and induced quantum linear connections (qlc) for all $n$ $\in$ $\Z$. The left/right qvb associated to the corepresentation $n$ will be denoted by $\zeta^\l_n:=(\Gamma^\l(\S^{2}_q,\C_n\S^{2}_q),+,\cdot)$, $\zeta^\r_n:=(\Gamma^\r(\S^{2}_q,\C_n\S^{2}_q),+,\cdot)$, respectively. Moreover, the induced qlc by any $\omega$ will be denote by $\nabla^{\omega}_n$, $\widehat{\nabla}^{\omega}_n$, respectively.

Now we have to verify if $\zeta_{HF}$ satisfies all the conditions written in Definition 2.1 and Remark 2.3 of \cite{sald2}. In order to do this, we need the following lemma.

\begin{Lemma}
\label{3.2}
Let us consider the linear functional 
\begin{equation}
\label{6.f3.4}
\begin{aligned}
\int_{\S^2_q} : \Omega^2(\S^2_q) &\longrightarrow \C\\
p\,\eta_-\eta_+ &\longmapsto h_q(p),
\end{aligned}
\end{equation}
where $h_q$ is the quantum Haar measure of $\SU_q$ \cite{woro1}. Then $$d(\Omega^1(\S^2_q)) \subseteq \Ker\left( \int_{\S^2_q} \right).$$
\end{Lemma}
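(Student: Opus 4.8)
The plan is to parametrize a general element of $\Omega^1(\S^2_q)$, apply $d$ to land in $\Omega^2(\S^2_q)=\S^2_q\cdot\eta_-\eta_+$, read off the coefficient of $\eta_-\eta_+$, and show that this coefficient always lies in $\Ker(h_q)$. By $\C$--linearity of $\int_{\S^2_q}$ it suffices to treat a single summand, and since the first order calculus satisfies $\Omega^1(\S^2_q)=\S^2_q\,d\S^2_q$, every $1$--form is a finite sum $\sum_i a_i\,db_i$ with $a_i,b_i\in\S^2_q$. Thus I would reduce to $\phi=a\,db$; because $d^2=0$, its image is the manifestly exact two--form $d\phi=da\,db$, a product of two exact horizontal $1$--forms, which is the cleanest object to integrate.

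First I would record the first structure equation of the $3D$--calculus \cite{micho3} restricted to the base. On $SU_q(2)$ one has $dx=(X_-x)\,\omega_-+(X_+x)\,\omega_++(X_0x)\,\omega_0$, where $X_-,X_+,X_0$ are the quantum tangent vectors (twisted derivations) and $\omega_0$ is the vertical form. Since every $x\in\S^2_q$ is ${_{SU_q(2)}}\Phi$--invariant its vertical derivative vanishes, $X_0x=0$, so $dx=(X_-x)\,\eta_-+(X_+x)\,\eta_+$ with $X_\pm:\S^2_q\to\S^2_q$ the two horizontal quantum tangent vectors. Substituting this for $da$ and $db$, and using $\eta_\pm\eta_\pm=0$, the graded--commutation relation between $\eta_-$ and $\eta_+$, and the bimodule rules moving functions past $\eta_\pm$, I obtain $d\phi=p\,\eta_-\eta_+$ with $p$ an explicit antisymmetrized combination of the shape $(X_-a)(X_+b)-(\text{twist})(X_+a)(X_-b)$ up to the module $q$--twists.

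The decisive step is then $h_q(p)=0$. Here I would invoke the invariance of the quantum Haar measure \cite{woro1}: each $X_\pm$ is a left--invariant quantum tangent vector, i.e. convolution by a functional annihilating $\mathbbm{1}$, and the right invariance of $h_q$ forces $h_q\circ X_\pm=0$ on all of $SU_q(2)$, in particular on $\S^2_q$. Combining $h_q\big(X_\pm(xy)\big)=0$ with the twisted Leibniz rule $X_\pm(xy)=(X_\pm x)\,y+\sigma_\pm(x)\,(X_\pm y)$ yields the quantum integration--by--parts identities $h_q\big((X_\pm x)\,y\big)=-h_q\big(\sigma_\pm(x)\,(X_\pm y)\big)$, which exhibit the antisymmetric coefficient $p$ as a total derivative and hence annihilate it under $h_q$.

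The main obstacle I anticipate is bookkeeping rather than conceptual: one must track the $q$--factors appearing in the bimodule relations and in the Leibniz rule for $X_\pm$, and confirm that after all corrections $p$ really is a sum of terms of the form $X_\pm(\cdot)$ --- equivalently, that the contributions of $d\eta_-$ and $d\eta_+$ recombine with the derivative terms into total derivatives. Should the clean integration--by--parts route prove delicate, the fallback is purely computational: expand $p$ in the monomial basis $\alpha^m\gamma^k\gamma^{\ast l}$ of $\S^2_q$ and apply the explicit Haar formula, which vanishes unless $m=0$ and $k=l$; one then checks that the weight shift carried by each $X_\pm$ moves every monomial off the support of $h_q$, giving $h_q(p)=0$ and therefore $d\phi\in\Ker\big(\int_{\S^2_q}\big)$.
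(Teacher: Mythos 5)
Your decisive step is exactly the paper's: the identity $h_q\circ X_\pm=0$, obtained by combining invariance of the Haar state, $h_q(y^{(1)})\,y^{(2)}=h_q(y)\mathbbm{1}$, with the fact that the functionals defining $X_\pm$ come from the quantum germs map and therefore annihilate $\mathbbm{1}$ (this is literally the chain $h_q(y^{(1)})\lambda_-(y^{(2)})=\lambda_-(h_q(y)\mathbbm{1})=0$ in the paper; incidentally it is the \emph{left} invariance of $h_q$ that enters with this convention, though for a compact quantum group the Haar state is two--sided invariant). Where you differ is the reduction, and your route creates two obligations that the proposal does not discharge. First, $\Omega^1(\S^2_q)=\S^2_q\,d\S^2_q$ is not a definition in this framework: base forms are by definition the ${_\H}\Phi$--invariant horizontal forms, which are parametrized as $x\eta_-+y\eta_+$ with $x,y$ in the weight $\pm2$ subspaces of $SU_q(2)$ (this is how they appear in the proof of Proposition \ref{2.2}). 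The equality with $\S^2_q\,d\S^2_q$ is a true, citable fact about the standard Podles sphere \cite{pod}, but it is a theorem about this example, not a tautology; if the inclusion $\S^2_q\,d\S^2_q\subseteq\Omega^1(\S^2_q)$ were proper, your argument would establish the lemma only on a subspace. The paper avoids this entirely by working with $x\eta_-+y\eta_+$ directly: the $\eta_-\eta_+$--coefficient of $d(y\eta_+)$ is then a \emph{single} quantum derivative $y^{(1)}\lambda_-(y^{(2)})$ of the coefficient function, so one application of the invariance identity finishes the proof. Second, your coefficient $p$ for $da\,db$ is a $q$--antisymmetrized bilinear expression in first derivatives, and exhibiting it as a total derivative requires, beyond the twisted Leibniz rule, the commutation relation between $X_+$ and $X_-$ modulo the vertical vector $X_0$ together with $X_0|_{\S^2_q}=0$ (equivalently, the relation enforced by $d^2b=0$). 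That is where the ``bookkeeping'' carries actual mathematical content, and it makes your main route strictly longer than the paper's, for the same payoff.

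A concrete warning about your fallback: it is wrong as stated. Since $p\,\eta_-\eta_+$ is invariant, $p$ has weight zero, and the support of $h_q$ on monomials is exactly the weight--zero diagonal part, $h_q(\alpha^m\gamma^k\gamma^{\ast\,l})\neq 0$ precisely when $m=0$ and $k=l$. So the derivatives do \emph{not} move the monomials of $p$ off the support of $h_q$: terms of the form $(\gamma\gamma^\ast)^k$ do in general occur in $p$ (e.g.\ already for $a=b=\gamma\gamma^\ast$, since products such as $\alpha\alpha^\ast\gamma\gamma^\ast$ expand into $\gamma\gamma^\ast-q^2(\gamma\gamma^\ast)^2$). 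What actually happens is that the Haar--weighted coefficients of these on--support monomials cancel, and the structural reason for that cancellation is precisely the invariance identity $h_q\circ X_\pm=0$. In other words, the fallback does not bypass the key step; it is the key step in disguise, and claiming termwise vanishing would lead you to a false intermediate assertion.
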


\begin{proof}
Let us start by remembering the definition of $D$ for degree zero: $$D(a)=a^{(0)}(\pi_-(a^{(1)})+\pi_+(a^{(1)})),$$ where $\pi_\pm:=\rho_\pm\circ \pi$ with $\pi:\mathrm{SU}_q(2)\longrightarrow {_\inv}\Xi$ the quantum germs map and $\rho_\pm:{_\inv}\Xi\longrightarrow \C\eta_\pm$ the canonical projection. In this way, we define the linear functional $$\lambda_-: \mathrm{SU}_q(2)\longrightarrow \C $$ such that $\pi_-(a)=\lambda_-(a)\eta_-$. Notice that $\mathbbm{1}$ $\in$ $\Ker(\lambda_-)$.\\

Consider $y\eta_+$ $\in$ $\Omega^1(\S^2_q)$. Hence
\noindent \begin{eqnarray*}
\int_{\S^2_q}d(y\eta_+) \,= \,  h_q(y^{(1)})\,\lambda_-(y^{(2)})\,= \,\lambda_-(h_q(y^{(1)})y^{(2)})&= &\lambda_-(h_q\ast y)  
  \\
  &= &
\lambda_-(h_q(y)\mathbbm{1})\,= \,\lambda_-(\mathbbm{1})h_q(y)=0.
\end{eqnarray*}

\noindent In an analogous way it can be proved that $$\int_{\S^2_q}d(x\eta_-)=0$$ and therefore the Lemma follows.
\end{proof}

\begin{Proposition}
\label{3.4}
The quantum $2$--sphere satisfies all the conditions written in Definition 2.1 and Remark 2.3 of \cite{sald2} with respect to the graded differential $\ast$--algebra of base forms.
\end{Proposition}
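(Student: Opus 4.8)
The plan is to verify, clause by clause, that the graded differential $\ast$--algebra $\Omega^\bullet(\S^2_q)$ of base forms carries exactly the data demanded by Definition 2.1 and Remark 2.3 of \cite{sald2}: a finite top degree whose top--forms form a free rank--one module, a quantum Hodge operator, and a faithful integral that is closed under $d$.

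First I would record the structure of the base calculus induced by the $3D$ calculus. Since the base forms are precisely the horizontal $U(1)$--invariant forms, one has $\Omega^0(\S^2_q)=\S^2_q$, the module $\Omega^1(\S^2_q)$ generated over $\S^2_q$ by $\{\eta_-,\eta_+\}$, and $\Omega^2(\S^2_q)=\{\,p\,\eta_-\eta_+ : p\in\S^2_q\,\}$, with $\Omega^k(\S^2_q)=\{0\}$ for $k\geq 3$. Thus the top degree is $n=2$, and using the commutation relations between $\eta_\pm$ and the generators $\alpha\alpha^\ast,\alpha\gamma^\ast$ of $\S^2_q$ I would check that $\eta_-\eta_+$ generates $\Omega^2(\S^2_q)$ freely, both as a left and as a right $\S^2_q$--module. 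This produces the volume form $\dvol:=\eta_-\eta_+$ and disposes of the purely algebraic clauses of Definition 2.1.

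Next I would exhibit the quantum Hodge operator $\star$. In the extreme degrees it is forced (up to normalization) to read $\star\,\mathbbm{1}=\eta_-\eta_+$ and $\star(\eta_-\eta_+)=\mathbbm{1}$, while on $\Omega^1(\S^2_q)$ it acts as a fixed invertible scalar matrix in the basis $\{\eta_-,\eta_+\}$, whose entries I would pin down by requiring that $\star$ be an isomorphism of left $\S^2_q$--modules, compatible with the $\ast$--operation and with the prescribed normalization; the $q$--twisted commutation relations between $\eta_\pm$ and $\S^2_q$ enter decisively here. I would also record the right--module counterpart demanded by the left/right formalism of \cite{sald2}. The remaining axioms---bijectivity and the value of $\star^2$ on $\Omega^1(\S^2_q)$---then follow from the explicit formulas, and the reality condition from the conjugation rules for $\eta_\pm$ recorded in \cite{micho3}.

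Finally I would settle the integral. The functional $\int_{\S^2_q}$ of \eqref{6.f3.4} is well defined precisely because $\Omega^2(\S^2_q)$ is free of rank one, and it is faithful (equivalently, the induced pairing on $\Omega^\bullet(\S^2_q)$ is non--degenerate) since the quantum Haar measure $h_q$ is faithful on $\S^2_q$. The closedness clause of Remark 2.3, namely $\int_{\S^2_q}\circ\, d=0$ on $\Omega^1(\S^2_q)$, is exactly Lemma \ref{3.2}, which I would simply invoke. I expect the genuine obstacle to lie not in this last step---the computational heart has already been isolated in Lemma \ref{3.2}---but in the construction of $\star$: forcing a single operator to be a module map simultaneously compatible with the $q$--twisted bimodule relations and with the $\ast$--structure is where all the noncommutative subtlety of the quantum sphere resides, and it is the point I would treat most carefully.
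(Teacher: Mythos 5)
There is a genuine gap: your plan never constructs the quantum Riemannian metric, which is one of the conditions of Definition 2.1 of \cite{sald2} and is the datum from which everything else in the paper's proof flows. The paper's argument has four parts: (i) orientation ($\Omega^k(\S^2_q)=0$ for $k>2$ and $\dvol=\eta_-\eta_+$ is a quantum $2$--volume form); (ii) an explicit left quantum Riemannian metric, given degree by degree by $\langle\hat p,p\rangle=\hat p\,p^\ast$ on $\Omega^0$ and $\Omega^2$ and, crucially, by
\begin{equation*}
\langle \hat{x}\eta_-+\hat{y}\eta_+\,,\,x\eta_-+y\eta_+\rangle=\tfrac{1}{2}\left(q^2\hat{x}x^\ast+\hat{y}y^\ast\right)
\end{equation*}
on $\Omega^1$, the $q^2$--weight being exactly what makes $\dvol$ a left quantum Riemannian $2$--form (with the right-handed structure obtained from the corresponding remark of \cite{sald2}); (iii) the quantum integral, faithful because the Haar state $h_q$ is faithful, with closedness supplied by Lemma \ref{3.2}; and (iv) the Hodge operator, which is then \emph{computed} from these data, not postulated. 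Your items (i) and (iii) match the paper, but your treatment of the Hodge operator inverts the logic of the formalism: you propose to ``force'' $\star$ axiomatically as a left module isomorphism acting by a fixed scalar matrix in the basis $\{\eta_-,\eta_+\}$, whereas in \cite{sald2} the operator is derived from the metric and the volume form, and without choosing the metric (in particular the $q^2$--weight) there is nothing to derive it from. The condition you must verify is the existence of the metric making $\dvol$ an lqr form; exhibiting some operator with Hodge-like properties does not discharge that clause.

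A secondary but concrete error: your guessed form of $\star$ on $1$--forms is wrong in kind, not just in coefficients. The actual operator is antilinear with respect to the algebra coefficients and swaps the components with a sign,
\begin{equation*}
\star_\l\,(x\eta_-+y\eta_+)=\tfrac{1}{2}\left(-y^\ast\eta_-+x^\ast\eta_+\right),\qquad
\star_\l\,p=p^\ast\dvol,\qquad \star_\l(p\,\dvol)=p^\ast,
\end{equation*}
so it is not ``a fixed invertible scalar matrix'' acting $\S^2_q$--linearly in the basis $\{\eta_-,\eta_+\}$. Had you carried out your plan as written, the requirement that $\star$ be a left module map compatible with the $q$--twisted bimodule relations would have led you into exactly the kind of inconsistency that the conjugation in the formulas above is designed to avoid. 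Finally, you should also record the (trivial but required) preliminary observation that $\S^2_q$ is $C^\ast$--closeable.
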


\begin{proof}
First of all let us observe that $\S^2_q$ is (obviously)  $C^\ast$--closeable.
\begin{enumerate}
\item $\S^2_q$ is oriented since for $k> 2$, $\Omega^k(\S^2_q)=0$ and 
\begin{equation*}
\dvol:=\eta_-\eta_+
\end{equation*}
is a quantum $2$--volume form.
\item A direct calculation shows that a lqrm can be defined on $\S^2_q$ by means of
\begin{equation*}
\begin{aligned}
\langle-,-\rangle: \S^2_q\times \S^2_q &\longrightarrow \S^2_q\\
(\;\hat{p}\;,\;p\;)&\longmapsto \hat{p}\,p^\ast,
\end{aligned}
\end{equation*}
\begin{equation*}
\begin{aligned}
\langle-,-\rangle\;\;:\;\; \Omega^1(\S^2_q)\;\quad\times\;\quad \Omega^1(\S^2_q)\;\;&\longrightarrow \;\S^2_q\\
((\hat{x}\eta_-+\hat{y}\eta_+),(x\eta_-+y\eta_+))&\longmapsto \dfrac{1}{2} \left( q^2\hat{x} x^\ast + \hat{y} y^\ast\right)
\end{aligned}
\end{equation*}
and finally
\begin{equation*}
\begin{aligned}
\langle-,-\rangle: \Omega^2(\S^2_q)\;\times\; \Omega^2(\S^2_q)&\longrightarrow \S^2_q\\
(\;\hat{p}\,\dvol\;,\;p\,\dvol\;)&\longmapsto \hat{p}\,p^\ast.
\end{aligned}
\end{equation*}
With this lqrm, $\dvol$ is actually a lqr $2$--form. Taking into account Remark \ref{a.2.2}, we get a rqrm with a rqr $2$--form.
\item According to \cite{woro1}, $h_q$ is a faithful state on $\mathrm{SU}_q(2)$ and hence the linear functional of Equation \ref{6.f3.4} is actually a quantum integral. In this way, by the previous lemma we conclude that $(\S^2_q,\cdot,\mathbbm{1},\ast)$ is a quantum space without boundary.
\item A direct calculation shows
\begin{equation*}
\star_\l\, p=p^\ast\,\dvol
\end{equation*}
for all $p$ $\in$ $\S^2_q$;  
\begin{equation*}
\star_\l (p\,\dvol)=p^\ast
\end{equation*}
for all $p\,\dvol$ $\in$ $\Omega^2(\S^2_q)$ and finally  
\begin{equation*}
\star_\l\,\mu=\dfrac{1}{2}\left(-y^\ast\eta_-+x^\ast\eta_+\right),
\end{equation*}
for all $\mu=x\eta_-+y\eta_+$ $\in$ $\Omega^1(\S^2_q).$ To define $\star_\r$ we can use Remark 2.2 of \cite{sald2}.
\end{enumerate}
\end{proof}

It is worth mentioning that for this differential calculus, the only possible embedded differential is $\delta=0$ \cite{micho2}, \cite{sald2}.

\section{Yang--Mills--Scalar--Matter Fields}

In this section we are going to show solutions of the field equations of the Yang--Mills--Scalar--Matter theory (\cite{sald2}) using $(\zeta_{HF},\omega^\c)$ and all the structures that we have just defined.

\subsection{Non--commutative geometrical Yang--Mills Fields}

We know that every single qpc $\omega$ has the form \cite{micho2} $$\omega=\omega^\c+\lambda \qquad \mbox{ with }\qquad \lambda(\varsigma)=x\eta_-+y\eta_+ \,\in\,\Omega^1(\S^2_q).$$

\begin{Proposition}
\label{6.3.2}
Every YM qpc is of the form $\omega^\c+\lambda$, where $\lambda(\varsigma)=dp$ for some $p$ $\in$ $\S^{2}_q$.
\end{Proposition}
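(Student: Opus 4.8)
The plan is to characterize the Yang--Mills (YM) quantum principal connections by minimizing (or finding the critical points of) the Yang--Mills functional, which in this non-commutative setting should be the integral over $\S^2_q$ of the squared norm of the curvature, and to show that the resulting field equation forces $\lambda(\varsigma)$ to be exact. First I would write down the curvature of a general qpc $\omega = \omega^\c + \lambda$. Since $\omega^\c$ is the canonical qpc with curvature given by Equation \ref{f.3}, and since $\lambda(\varsigma) = x\eta_- + y\eta_+ \in \Omega^1(\S^2_q)$, I would compute $R^\omega$ in terms of $R^{\omega^\c}$ plus correction terms built from $d\lambda$ and (possibly) a quadratic term $\lambda \wedge \lambda$. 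I expect the quadratic term to vanish or simplify drastically because the structure group $U(1)$ is abelian and the $3D$-calculus satisfies $\Gamma^{\wedge k} = 0$ for $k \geq 2$, so $R^\omega$ should differ from $R^{\omega^\c}$ only by the image of $d\lambda(\varsigma)$ in $\Omega^2(\S^2_q)$.

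Next I would invoke the graded structure and the quantum Hodge operator $\star_\l$ established in Proposition \ref{3.4}, together with the quantum integral $\int_{\S^2_q}$ from Lemma \ref{3.2}, to write the YM functional $\YM(\omega) = \int_{\S^2_q}\langle R^\omega, R^\omega\rangle$ (suitably interpreted via $\star_\l$ and $\dvol = \eta_-\eta_+$). The YM equation is the condition that $\omega$ is a critical point of this functional under variations $\lambda \mapsto \lambda + t\,\mu$ with $\mu(\varsigma) \in \Omega^1(\S^2_q)$. Differentiating and using the adjointness properties of $\star_\l$ and the fact that $\S^2_q$ is a quantum space without boundary (item 3 of Proposition \ref{3.4}, which is exactly what makes integration by parts valid), the Euler--Lagrange equation should reduce to a co-closedness condition of the form $d^\star R^\omega = 0$, i.e. the curvature must be harmonic.

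The key step is then to translate $d^\star R^\omega = 0$ back into a condition on $\lambda(\varsigma)$. Because $\Omega^2(\S^2_q)$ is one-dimensional over the relevant module (generated by $\dvol$) and $d: \Omega^1(\S^2_q) \to \Omega^2(\S^2_q)$ plays the role of the relevant differential, I would analyze the decomposition of $\lambda(\varsigma) = x\eta_- + y\eta_+$ into an exact part $dp$ and a harmonic (or co-closed) remainder, using a Hodge-type decomposition for the two-dimensional quantum sphere. The YM condition should kill the non-exact component: concretely, the field equation forces the $\star_\l$-image of the total curvature to be $d$-closed, and combined with $R^{\omega^\c}$ already being (a multiple of) the harmonic volume form, the only freedom left that solves the equation is $\lambda(\varsigma) = dp$ for some $p \in \S^2_q$.

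The main obstacle I anticipate is establishing the integration-by-parts / adjointness identity rigorously in the non-commutative setting, that is, verifying that $\int_{\S^2_q}\langle d\mu, \cdot\rangle = \int_{\S^2_q}\langle \mu, d^\star(\cdot)\rangle$ with no boundary contribution. This is precisely where Lemma \ref{3.2} (stating $d(\Omega^1(\S^2_q)) \subseteq \Ker(\int_{\S^2_q})$) and the faithfulness of the quantum Haar measure $h_q$ become essential, and I would need to handle the non-commutativity of the coefficients $x, y$ and the specific commutation relations $\eta_\pm \alpha$, $\eta_\pm \gamma$ carefully so that the variational computation genuinely yields an intrinsic, well-defined co-differential. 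A secondary subtlety is confirming that the quadratic term in the curvature really does not obstruct the linearization; given the abelian structure group and the collapse of $\Gamma^{\wedge k}$ for $k \geq 2$, I expect this to be routine, but it must be checked explicitly before concluding that the critical-point equation is linear in $\lambda$ and hence solvable exactly by exact forms $dp$.
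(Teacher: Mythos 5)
Your variational strategy is, in substance, the paper's own: the curvature of $\omega=\omega^\c+\lambda$ is affine in $\lambda$, namely $R^{\omega}(\varsigma)=(1+q^2)q\,\eta_-\eta_+ + d\lambda(\varsigma)$ (the quadratic term is indeed absent, for the reasons you give), and the derivative of $\qS_\YM$ at $\omega$ in the direction $\lambda'$ collapses --- using the adjointness of $d$ and $d^{\star_\l}$, whose integration--by--parts content is exactly Lemma \ref{3.2}, and using that the pairing of $d\lambda'(\varsigma)$ against the constant part $(1+q^2)q\,\dvol$ of the curvature integrates to zero --- to $-\tfrac{1}{2}\,\langle d\lambda'(\varsigma)\,|\,d\lambda(\varsigma)\rangle_\l$. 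Taking $\lambda'=\lambda$ and using positive--definiteness of the inner product yields $d\lambda(\varsigma)=0$. Up to this point your outline matches the paper's proof (the paper carries both the left and right terms of the Lagrangian, which merge into the single left term above; that difference is cosmetic).

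The genuine gap is the final step, from the field equation to exactness. Whether you phrase the equation as $d\lambda(\varsigma)=0$ or as $d^{\star_\l}R^{\omega}(\varsigma)=0$, you still must prove that a closed $1$--form in $\Omega^1(\S^2_q)$ is exact. Your proposal replaces this with an appeal to a ``Hodge--type decomposition'' and the claim that the YM condition ``kills the non--exact component''; it does not. A harmonic (closed and co--closed) remainder $\mu$ satisfies the field equation just as well as an exact one, so your argument only gives $\lambda(\varsigma)=dp+\mu$ with $\mu$ harmonic. Ruling out $\mu\neq 0$ is precisely the statement that the first de Rham cohomology of the calculus vanishes --- a nontrivial input that you neither prove nor cite, and which a Hodge decomposition presupposes rather than delivers (moreover, constructing such a decomposition in this purely algebraic setting, where the quantum metrics are $\S^2_q$--valued and no completeness is available, is itself a substantial task). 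The paper closes exactly this hole by invoking Woronowicz's computation \cite{woro2} that for the $3D$--calculus on $\mathrm{SU}_q(2)$ the zeroth cohomology group is $\C$ and the first cohomology group is $\{0\}$; hence the closed form $\lambda(\varsigma)$ is exact, $\lambda(\varsigma)=dp$ with $p\in\S^2_q$. Without this cohomological fact, or a direct computation substituting for it, your proof does not reach the stated conclusion.
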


\begin{proof}
First, notice that for all qpc $\omega=\omega^\c+\lambda$ (see Equation \ref{f.3}) 
\begin{equation}
\label{6.f3.2}
R^{\omega}(\varsigma)=(1+q^2)q\eta_-\eta_++d\,\lambda(\varsigma);
\end{equation}
so 
\begin{eqnarray*}
\left.\dfrac{\partial}{\partial z}\right|_{z=0}\qS_\YM(\omega + z\,\lambda')\,&= & - \dfrac{1}{4}\, \left( \langle \lambda'(\varsigma)\,|\, d^{\star_\l}R^{\omega}(\varsigma)\rangle_\l +\langle \lambda'(\varsigma)^\ast\,|\, d^{\star_\r}R^{\omega}(\varsigma)^\ast\rangle_\r \right)
\\
&= & 
- \dfrac{1}{4}\, \left(\langle d\lambda'(\varsigma)\,|\, R^{\omega}(\varsigma)\rangle_\l+\langle d\lambda'(\varsigma)^\ast\,|\, R^{\omega}(\varsigma)^\ast\rangle_\r \right)
 \\
 &= & 
- \dfrac{1}{4}\, \left( \langle d\lambda'(\varsigma)\,|\, d\lambda(\varsigma)\rangle_\l + \langle d\lambda'(\varsigma)^\ast\,|\, d\lambda(\varsigma)^\ast\rangle_\r \right)
 \\
 &= & 
- \dfrac{1}{2}\,  \langle d\lambda'(\varsigma)\,|\, d\lambda(\varsigma)\rangle_\l.
\end{eqnarray*}
Since  $\langle -\,|\, -\rangle_\l$ is an inner product we conclude that  every YM qpc has the form $\omega^\c+\lambda$ with $d\lambda(\varsigma)=0$.

In accordance with \cite{woro2}, the zero cohomology group of $\mathrm{SU}_q(2)$ is $\C$; while the first cohomology group is $\{0\}$. Hence, since $\lambda(\varsigma)$ $\in$ $\Omega^1(\S^2_q)$ is exact, there exists $p$ $\in$ $\S^{2}_q$ such that $\lambda(\varsigma)=dp$.
\end{proof}

In this case, the quantum gauge group (qgg) of the Lagrangian (\cite{sald1}) satisfies
\begin{equation}
\label{6.f3.3}
\qGG_\YM:=\{\f\in \qGG\mid \f^{\circledast} \omega^\c=\omega^\c+\lambda \mbox{ with }d\lambda=0 \}.
\end{equation}
We can deduce that $\mathrm{U}(1) \subseteq \qGG_\YM$ and all YM qpcs are in the same orbit, just like in the {\it classical} case.

\subsection{Non--commutative geometrical n--multiple of Space--Time Scalar Matter Fields.}

 According to \cite{sald2}, it is enough to  look for eigenvectors of the left
quantum Laplace--de Rham  operator $\vartriangle_\l$ with $\S^{2}_q$--valued eigenvalues. A direct calculation shows that  $$d^{\star_\l}d p=\dfrac{1}{2}(1+q^2)^2\,p \quad\mbox{ with }\quad p=\mathbbm{1}-(1+q^2)\gamma\gamma^\ast,\,\alpha\gamma^\ast,\,\alpha^\ast\gamma.$$ It is important to mention that for $q$ $\in$ $(-1,1)-\{0\}$, these eigenvalues are not $0$. In this way, taking a potential such that $$V'=\dfrac{1}{2}(1+q^{2})^{2}$$ it is easy to find non--commutative geometrical space--time scalar matter fields. Of course, there are more solutions but they depend on the form of the potential $V$.

\subsection{Non--commutative geometrical Yang--Mills--Scalar--Matter Equations.}

Let us take $n$ $\in$ $\Z$. If $n=0$,  YMSM fields are triplets $(\omega,T_1,T_2)$ where $\omega$ is a YM qpc and $(T_1, T_2)$ is an stationary point of $\qS_\SM$.

Consider now $n\not=0$. It is easy to see that  $$d^{\star_\l}R^{\omega^\c}(\varsigma)=0;$$ so we have to look for  $T_1$ $\in$ $\Gamma^\l(\S^{2}_q,\C_n\S^{2}_q)$, $T_2$ $\in$ $\Gamma^\r(\S^{2}_q,\C_{-n}\S^{2}_q)$ such that
\begin{equation}
\label{6.f3.7}
 \langle \Upsilon_n\circ K^{\lambda}(T_1)\,|\,\nabla^{\omega^\c}_{n}T_1\rangle_\l  -  \langle \widetilde{\Upsilon}_{-n}\circ \widehat{K}^{\lambda}(T_2)\,|\,\widehat{\nabla}^{\omega^\c}_{-n}T_2\rangle_\r =0
\end{equation}
for all $\lambda$ $\in$ $\overrightarrow{\mathfrak{qpc}(\zeta_{HF})}$, and 
\begin{equation}
\label{6.f3.8}
\nabla^{\omega^\c\,\star_\l}_n\left(\nabla^{\omega^\c}_n\,T_1\right)-V'_\l(T_1)^\ast\,T_1=0\,,\qquad \widehat{\nabla}^{\omega^\c\,\star_\r}_{-n} \left(\widehat{\nabla}^{\omega^\c}_{-n}\,T_2\right)-T_2 \,V'_\r(T_2)^\ast=0.
\end{equation}

Now it is possible to explicitly find solutions. For example, for $n>0$ the triplet $(\omega^\c,T_1,T_2)$ such that $$T_1(1)=\alpha^n\,,\quad T_2(1)=\alpha^{\ast\,n}\quad \mbox{ or }\quad T_1(1)=\gamma^n\,,\quad T_2(1)=\gamma^{\ast\,n}$$ is a YMSM field for a potential such that $$V'=\dfrac{1}{2}\left(\dfrac{q^{4}(1-q^{2n})}{1-q^{2}}\right).$$ It is worth mentioning that $q\longrightarrow 1$ implies $V'\longrightarrow n$, so we recover the winding number $n$. Of course, there are more solutions; however, they depend on the form of the potential $V$.

The  spectrums of $$\nabla^{\omega^\c\,\star_\l}_n\nabla^{\omega^\c}_n:\Gamma^\l(\S^2_q,\C_n\S^2_q)\longrightarrow \Gamma^\l(\S^2_q,\C_n\S^2_q)$$ and $$\widehat{\nabla}^{\omega^\c\,\star_\r}_{n} \widehat{\nabla}^{\omega^\c}_{n}:\Gamma^\r(\S^2_q,\C_n\S^2_q)\longrightarrow \Gamma^\r(\S^2_q,\C_n\S^2_q)  $$ for all $n$ $\in$ $\Z$ are shown in the following tables. In the second row of the table $1$ and the fifth row of the table $2$, $m$, $k$ $\in$ $\N_0$ (in the other cases, $m$, $k$, $l$ $\in$ $\N$) and they  cannot be both $0$ at the same time. On the other hand, $p(\gamma^k\gamma^{\ast\,l})$, $\widehat{p}(\gamma^k\gamma^{\ast\,l})$ are polynomials with coefficients in $\C$ such that their terms are $\gamma^k\gamma^{\ast\,l}$, $\gamma^{k-1}\gamma^{\ast\,l-1}$, etc. until $\gamma$ or  $\gamma^\ast$ disappear. For example $$p(\gamma\gamma^{\ast})=\widehat{p}(\gamma\gamma^{\ast})=\mathbbm{1}-(1+q^2)\gamma\gamma^\ast.$$  Polynomials $p(\alpha^{m}\gamma^k\gamma^{\ast\,l})$, $p(\alpha^{\ast\,m}\gamma^k\gamma^{\ast\,l})$, $\widehat{p}(\alpha^{m}\gamma^k\gamma^{\ast\,l})$, $\widehat{p}(\alpha^{\ast\,m}\gamma^k\gamma^{\ast\,l})$ follow an analogous rule. For example $$p(\alpha\gamma\gamma^\ast)=-\dfrac{(q^{6}+3q^{4}+2q^{2}+1)(q^{2}+q^{4})}{q^{6}+2q^{4}+2q^{2}+1}\alpha+(q^{6}+3q^{4}+2q^{2}+1)\alpha\gamma\gamma^\ast$$ and $$\widehat{p}(\alpha\gamma\gamma^\ast)=-\dfrac{(q^{4}+2q^{2}+q^{-2}+3)(1+q^{2})}{q^{4}+2q^{2}+q^{-2}+2}\alpha+(q^{4}+2q^{2}+q^{-2}+3)\alpha\gamma\gamma^\ast. $$ In addition, let us define the {\it the $q^2$--number } $$[r]:=[r]_{q^{2}}=\dfrac{1-q^{2r}}{1-q^{2}}$$ for all $r$ $\in$ $\N$. Then let us take  $$\lambda_{m,k,l}:=\dfrac{1}{2}\left([m]\,[l+1]\,q^{2(2-l)}+[k]\,[l+1]\,q^{4+2m-2l}+[l]\,[m+1]\,q^{2(1-l)}+[l]\,[k]\,q^{4+2m-2l}\right),$$  $$\lambda_{-m,k,l}:=\dfrac{1}{2}\left([m]\,[k+1]\,q^{2(1-m)}+[l]\,[k+1]\,q^{2-2m-2l}+[k]\,[m+1]\,q^{2(2-m)}+[l]\,[k]\,q^{4-2m-2l}\right),$$  $$\widehat{\lambda}_{m,k,l}:=\dfrac{1}{2}\left([m]\,[l+1]\,q^{2-2m-2k}+[k]\,[l+1]\,q^{2(1-k)}+[l]\,[m+1]\,q^{4-2m-2k}+[l]\,[k]\,q^{2(3-k)}\right) $$ and $$\widehat{\lambda}_{-m,k,l}:=\dfrac{1}{2}\left([m]\,[k+1]\,q^{4-2k+2l}+[l]\,[k+1]\,q^{2(2-k)}+[k]\,[m+1]\,q^{2-2k+2l}+[k]\,[l]\,q^{2(1-k)}\right).$$
 
Values of the first columns form linear basis of $\mathrm{SU}_q(2)$, thus for each $n$ $\in$ $\Z$, these sections form a basis of eigenvectors.

\begin{Proposition}
Considering $\Mor(n,{_{\mathrm{SU}_q(2)}}\Phi)=\Gamma^\l(\S^2_q,\C_n\S^2_q)=\Gamma^\r(\S^2_q,\C_n\S^2_q)$ just as a vector space, the operators $\nabla^{\omega^\c\,\star_\l}_n\nabla^{\omega^\c}_n$ and $\widehat{\nabla}^{\omega^\c\,\star_\r}_n\widehat{\nabla}^{\omega^\c}_n$ are not simultaneously diagonalizable for each $n$ $\in$ $\Z$.
\end{Proposition}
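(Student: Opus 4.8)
The plan is to show that the two self-adjoint operators $\nabla^{\omega^\c\,\star_\l}_n\nabla^{\omega^\c}_n$ and $\widehat{\nabla}^{\omega^\c\,\star_\r}_n\widehat{\nabla}^{\omega^\c}_n$, acting on the common vector space $\Mor(n,{_{\mathrm{SU}_q(2)}}\Phi)$, do not commute, since two diagonalizable operators on a finite- (or countable-) dimensional space are simultaneously diagonalizable if and only if they commute. Thus the whole problem reduces to exhibiting a single basis element $v$ on which the commutator $[\nabla^{\omega^\c\,\star_\l}_n\nabla^{\omega^\c}_n,\ \widehat{\nabla}^{\omega^\c\,\star_\r}_n\widehat{\nabla}^{\omega^\c}_n]$ fails to vanish. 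The tables preceding the statement already diagonalize each operator separately: on the basis vectors of the first columns, the left operator has eigenvalues $\lambda_{m,k,l}$ (and $\lambda_{-m,k,l}$) while the right operator has eigenvalues $\widehat{\lambda}_{m,k,l}$ (and $\widehat{\lambda}_{-m,k,l}$). Critically, the two operators are \emph{diagonal in different bases}: the left-operator eigenvectors involve the polynomials $p(\alpha^m\gamma^k\gamma^{\ast\,l})$ etc., whereas the right-operator eigenvectors involve the distinct polynomials $\widehat{p}(\alpha^m\gamma^k\gamma^{\ast\,l})$, and the worked examples show $p(\alpha\gamma\gamma^\ast)\neq\widehat{p}(\alpha\gamma\gamma^\ast)$.

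First I would fix a convenient value of $n$ and a small basis vector — the simplest nontrivial candidate being the section corresponding to $\gamma\gamma^\ast$ or to $\alpha\gamma\gamma^\ast$ — and compute both $\nabla^{\omega^\c\,\star_\l}_n\nabla^{\omega^\c}_n$ and $\widehat{\nabla}^{\omega^\c\,\star_\r}_n\widehat{\nabla}^{\omega^\c}_n$ applied to it, reading the eigenvalue off the tables. Next I would show that this vector is an eigenvector of, say, the left operator but is \emph{not} an eigenvector of the right operator, because expressing it in terms of the right operator's eigenbasis (the $\widehat{p}$-polynomials) produces a genuine superposition of vectors with distinct eigenvalues $\widehat{\lambda}$. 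The cleanest way to package this is to apply the commutator directly to the chosen vector and verify it is nonzero; since the two eigenbases differ already at the level of the explicit degree-two examples, the commutator acting on the associated basis element will land on a nonzero combination. I would therefore carry the computation out on the $\alpha\gamma\gamma^\ast$-type vector, where the displayed formulas for $p$ and $\widehat{p}$ make the discrepancy fully explicit.

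The main obstacle is bookkeeping rather than conceptual: I must be careful that the identification $\Mor(n,{_{\mathrm{SU}_q(2)}}\Phi)=\Gamma^\l(\S^2_q,\C_n\S^2_q)=\Gamma^\r(\S^2_q,\C_n\S^2_q)$ is taken \emph{only as vector spaces}, so that comparing eigenvectors of the left and right operators on the same underlying space is legitimate; the left and right module structures differ, which is precisely why the eigenbases diverge. The one point requiring genuine care is to confirm that the two families of eigenvectors are \emph{not} merely rescalings of one another for the chosen $n$: if $p(\cdot)$ and $\widehat{p}(\cdot)$ were scalar multiples they would share eigenvectors and the argument would collapse for that vector. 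The explicit degree-two data rule this out, since the ratios of the $\alpha$- and $\alpha\gamma\gamma^\ast$-coefficients in $p(\alpha\gamma\gamma^\ast)$ and $\widehat{p}(\alpha\gamma\gamma^\ast)$ are manifestly different rational functions of $q$ for $q\in(-1,1)-\{0\}$. Once a single such mismatch is in hand, non-commutativity of the two operators follows, and hence they cannot be simultaneously diagonalized.
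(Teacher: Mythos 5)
Your strategy is the same as the paper's: reduce simultaneous diagonalizability to commutativity (only the direction ``simultaneously diagonalizable $\Rightarrow$ commuting'' is needed, and that holds in any dimension), then exhibit one explicit section on which the two operators fail to commute. The paper's proof does exactly this, with witnesses $T(\mathbbm{1})=\alpha^{n}\gamma\gamma^{\ast}$ for $n>0$, $T(\mathbbm{1})=\alpha^{\ast\,n}\gamma\gamma^{\ast}$ for $n<0$, and $T(\mathbbm{1})=\alpha\gamma\gamma^{\ast\,2}$ for $n=0$.

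Two gaps need closing. (i) The statement is asserted for \emph{each} $n\in\Z$, so fixing ``a convenient value of $n$'' proves only that one case; $\alpha\gamma\gamma^{\ast}$ lies in $\Mor(1,{_{\mathrm{SU}_q(2)}}\Phi)$ only, and you need a witness in every $\Mor(n,{_{\mathrm{SU}_q(2)}}\Phi)$, e.g.\ the paper's family above. (ii) Your other candidate, $\gamma\gamma^{\ast}$ (the natural $n=0$ choice), genuinely fails: by the paper's own computation $p(\gamma\gamma^{\ast})=\widehat{p}(\gamma\gamma^{\ast})=\mathbbm{1}-(1+q^{2})\gamma\gamma^{\ast}$, so on the two--dimensional subspace spanned by $\mathbbm{1}$ and $\gamma\gamma^{\ast}$ both operators are diagonal in the \emph{same} basis; that subspace is invariant under both, hence the commutator vanishes identically on it and no contradiction can be extracted from $\gamma\gamma^{\ast}$. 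This is precisely the ``rescaling'' degeneration you flag as the dangerous point --- it does occur, and it is why the paper's $n=0$ witness is the higher--degree element $\alpha\gamma\gamma^{\ast\,2}$. Relatedly, your intermediate heuristic ``the two eigenbases differ, hence the commutator is nonzero on the corresponding basis vector'' is not valid by itself: commuting operators can be diagonal in different bases when eigenvalues are degenerate. What makes the argument sound is your stated fallback --- compute the commutator directly on the chosen witness and verify it is nonzero (this is what the paper does) --- or, equivalently, verify that on the invariant subspace spanned by $\alpha^{n}$ and $\alpha^{n}\gamma\gamma^{\ast}$ the left operator has two \emph{distinct} eigenvalues while $\widehat{p}(\alpha^{n}\gamma\gamma^{\ast})$ is not proportional to $p(\alpha^{n}\gamma\gamma^{\ast})$. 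With those repairs your proof coincides with the paper's.
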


\begin{proof}
We are going to prove that these operators do not commute each other. In fact $$(\widehat{\nabla}^{\omega^\c\,\star_\r}_n\widehat{\nabla}^{\omega^\c}_n)(\nabla^{\omega^\c\,\star_\l}_n\nabla^{\omega^\c}_n)T\not=(\nabla^{\omega^\c\,\star_\l}_n\nabla^{\omega^\c}_n)(\widehat{\nabla}^{\omega^\c\,\star_\r}_n\widehat{\nabla}^{\omega^\c}_n)T,$$ where $T(\mathbbm{1})=\alpha^{n}\gamma\gamma^{\ast}$ for $n>0$; $T(\mathbbm{1})=\alpha^{\ast\, n}\gamma\gamma^{\ast}$ for $n<0$ and $T(\mathbbm{1})=\alpha\gamma\gamma^{\ast\,2}$ for $n=0$.
\end{proof}

\noindent As we checked in the Subection $4.2.3$, the operators $\nabla^{\omega^\c\,\star_\l}_n\nabla^{\omega^\c}_n$ and $\widehat{\nabla}^{\omega^\c\,\star_\r}_n\widehat{\nabla}^{\omega^\c}_n$ are symmetric and non--negative.

There is a kind of $\ast$--symmetry between both operators, at least for the first five eigenvalues presented. Moreover, the eigenvalues are not symmetric under the change $n \longleftrightarrow -n,$ which is a difference with the {\it classical} case \cite{ku}. In contrast and in agreement with the {\it classical} case, both operators are not bounded. For example, let us fix $n$ and consider the eigenvalue of the fifth row of the table $6.1$ $$\dfrac{1}{2}\left([l]\,[m+1]\,q^{2(1-l)}+[m]\,[l+1]\,q^{2(2-l)}\right)=-\dfrac{q^2+q^6+2q^{2n+4}}{2(1-q^2)^2}+\dfrac{q^2(1+q^2)}{2(1-q^2)^2}(q^{-2l}+q^{2m+2}).$$ The first term in the right--hand side of the previous equality is a fixed number, and also the term $\dfrac{q^2(1+q^2)}{2(1-q^2)^2}$. However, since $m-l=n$ $$q^{-2l}+q^{2m+2}=q^{2n-2m}+q^{2m+2} \; \Longrightarrow\; \lim_{m\rightarrow \infty}q^{2n-2m}+q^{2m+2}=\pm \infty,$$ depending of the sign of $q$. By taking the {\it classical} limit $ q\longrightarrow 1$, both operators reproduces the spectrum of the Laplacian on associated vector bundles of the Hopf fibration \cite{ku}.

\begin{table}
\begin{tabular}{|c|c|c|c|c|c|}
\hline 
\multicolumn{1}{|c|}{$T(1)$} & \multicolumn{1}{|c|}{$n \in \Z$} & \multicolumn{1}{|c|}{$\lambda$}\rule[-0.3cm]{0cm}{0.8cm}\\\hline
\multicolumn{1}{|c|}{$\mathbbm{1}$} & \multicolumn{1}{|c|}{$0$} & \multicolumn{1}{|c|}{$0$} \rule[-0.3cm]{0cm}{0.8cm}\\\hline
\multicolumn{1}{|c|}{$\alpha^{m}\gamma^{k}$} & \multicolumn{1}{|c|}{$m+k=n$} & \multicolumn{1}{|c|}{$\dfrac{[n]\,q^4}{2}$}\rule[-0.4cm]{0cm}{1.2cm}\\\hline
\multicolumn{1}{|c|}{$\alpha^{\ast\,n}$, $ \gamma^{\ast\,n}$} & \multicolumn{1}{|c|}{$n>0$} & \multicolumn{1}{|c|}{$\dfrac{[n]\,q^{2(1-n)}}{2}$} \rule[-0.4cm]{0cm}{1.2cm}\\\hline
\multicolumn{1}{|c|}{$\alpha^{\ast\,m}\gamma^{\ast\,l}$} & \multicolumn{1}{|c|}{$m+l=n$} & \multicolumn{1}{|c|}{$-\dfrac{[-n]\,q^{2}}{2}$} \rule[-0.4cm]{0cm}{1.1cm}\\\hline
\multicolumn{1}{|c|}{$\alpha^m\gamma^{\ast\,l}$} & \multicolumn{1}{|c|}{$m-l=n$} & \multicolumn{1}{|c|} {$\dfrac{1}{2}\left([l]\,[m+1]\,q^{2(1-l)}+[m]\,[l+1]\,q^{2(2-l)}\right)$} \rule[-0.4cm]{0cm}{1.1cm}\\\hline
\multicolumn{1}{|c|}{$\alpha^{\ast\,m}\gamma^{k}$} & \multicolumn{1}{|c|}{$-m+k=n$} & \multicolumn{1}{|c|} {$\dfrac{1}{2} \left([m]\,[k+1]\,q^{2(1-m)}+[k]\,[m+1]\,q^{2(2-m)}\right)$} \rule[-0.4cm]{0cm}{1.1cm}\\\hline
\multicolumn{1}{|c|}{$p(\gamma^k\gamma^{\ast\,l})$} & \multicolumn{1}{|c|}{$k-l=n$} & \multicolumn{1}{|c|} {$\dfrac{1}{2} \left([l]\,q^{2(1-l)}+[k]\,q^4+2\,[l]\,[k]\,q^{2(2-l)}\right)$} \rule[-0.4cm]{0cm}{1.1cm}\\\hline
\multicolumn{1}{|c|}{$p(\alpha^{m}\gamma^k\gamma^{\ast\,l})$} & \multicolumn{1}{|c|}{$m+k-l=n$} & \multicolumn{1}{|c|} {$\lambda_{m,k,l}$} \rule[-0.4cm]{0cm}{1.1cm}\\\hline
\multicolumn{1}{|c|}{$p(\alpha^{\ast\,m}\gamma^k\gamma^{\ast\,l})$} & \multicolumn{1}{|c|}{$-m+k-l=n$} & \multicolumn{1}{|c|} {$\lambda_{-m,k,l}$} \rule[-0.4cm]{0cm}{1.1cm}\\\hline
\end{tabular}
\caption{Values for $\nabla^{\omega^\c\,\star_\l}_n\nabla^{\omega^\c}_n\, T=\lambda\, T$.}
\end{table}

\begin{center}
\begin{table}[b]
\centering
\begin{tabular}{|c|c|c|c|c|c|}
\hline 
\multicolumn{1}{|c|}{$\widehat{T}(1)$} & \multicolumn{1}{|c|}{$n \in \Z$} & \multicolumn{1}{|c|}{$\widehat{\lambda}$}\rule[-0.3cm]{0cm}{0.8cm}\\\hline
\multicolumn{1}{|c|}{$\mathbbm{1}$} & \multicolumn{1}{|c|}{$0$} & \multicolumn{1}{|c|}{$0$} \rule[-0.3cm]{0cm}{0.8cm}\\\hline
\multicolumn{1}{|c|}{$\alpha^{m}\gamma^{k}$} & \multicolumn{1}{|c|}{$m+k=n$} & \multicolumn{1}{|c|}{$-\dfrac{[-n]\,q^{2}}{2}$}\rule[-0.4cm]{0cm}{1.1cm}\\\hline
\multicolumn{1}{|c|}{$\alpha^{n}$, $ \gamma^{n}$} & \multicolumn{1}{|c|}{$n>0$} & \multicolumn{1}{|c|}{$\dfrac{[n]\,q^{2(1-n)}}{2}$} \rule[-0.4cm]{0cm}{1.1cm}\\\hline
\multicolumn{1}{|c|}{$\alpha^{\ast\,m}\gamma^{\ast\,l}$} & \multicolumn{1}{|c|}{$m+l=n$} & \multicolumn{1}{|c|}{$\dfrac{[n]\,q^{4}}{2}$} \rule[-0.4cm]{0cm}{1.1cm}\\\hline
\multicolumn{1}{|c|}{$\alpha^m\gamma^{\ast\,l}$} & \multicolumn{1}{|c|}{$m-l=n$} & \multicolumn{1}{|c|} {$\dfrac{1}{2}\left([m]\,[l+1]\,q^{2(1-m)}+[l]\,[m+1]\,q^{2(2-m)}\right)$ } \rule[-0.4cm]{0cm}{1.1cm}\\\hline
\multicolumn{1}{|c|}{$\alpha^{\ast\,m}\gamma^{k}$} & \multicolumn{1}{|c|}{$-m+k=n$} & \multicolumn{1}{|c|} {$\dfrac{1}{2}\left([k]\,[m+1]\,q^{2(1-k)}+[m]\,[k+1]\,q^{2(2-k)}\right)$} \rule[-0.4cm]{0cm}{1.1cm}\\\hline
\multicolumn{1}{|c|}{$\widehat{p}(\gamma^k\gamma^{\ast\,l})$} & \multicolumn{1}{|c|}{$k-l=n$} & \multicolumn{1}{|c|} {$\dfrac{1}{2}\left([l]\,q^{2(2-k)}+[k]\,q^{2(1-n)}+[l]\,[k]\,(1+q^{4})q^{2(1-k)}\right)$} \rule[-0.4cm]{0cm}{1.1cm}\\\hline
\multicolumn{1}{|c|}{$\widehat{p}(\alpha^{m}\gamma^k\gamma^{\ast\,l})$} & \multicolumn{1}{|c|}{$m+k-l=n$} & \multicolumn{1}{|c|} {$\widehat{\lambda}_{m,k,l}$} \rule[-0.3cm]{0cm}{0.8cm}\\\hline
\multicolumn{1}{|c|}{$\widehat{p}(\alpha^{\ast\,m}\gamma^k\gamma^{\ast\,l})$} & \multicolumn{1}{|c|}{$-m+k-l=n$} & \multicolumn{1}{|c|} {$\widehat{\lambda}_{-m,k,l}$} \rule[-0.3cm]{0cm}{0.8cm}\\\hline
\end{tabular}
\caption{Values for $\widehat{\nabla}^{\omega^\c\,\star_\r}_n\widehat{\nabla}^{\omega^\c}_n\, \widehat{T}=\widehat{\lambda}\, \widehat{T}$.}
\end{table}
\end{center}
\clearpage

\section{Concluding Comments}

There are a lot of interesting papers about the quantum Hopf fibration and its associated qvbs, as well as a treatment of gauge theory in this space, for example, \cite{qvbH}, \cite{l}, \cite{lz}, \cite{lrz}, \cite{z}. All of them follow the line of research of S. Majid and T. Brzezi{\'n}ski shown in \cite{bm}. Unlike all these papers, the work shown here follows the line of research of M. Durdevich in which we deal with two kind of covariant derivatives for any qpc (both agree in the {\it classical case}); this allows us to define induced qlc in left/right associated qvb as well as the Lagrangians and their respective field equations. For example, if we do not consider the right structure, Equation \ref{6.f3.7} becomes into $$\langle \Upsilon_n\circ K^{\lambda}(T_1)\,|\,\nabla^{\omega^\c}_{n}T_1\rangle_\l=0,$$ which does not have solutions for an arbitrary $n$. Furthermore, the operators $\nabla^{\omega^\c\,\star_\l}_n\nabla^{\omega^\c}_n,$ $\widehat{\nabla}^{\omega^\c\,\star_\r}_n\widehat{\nabla}^{\omega^\c}_n$ are not the same, they do not even commute between them!. It strongly suggests that it is important to take into account both of structures.

 The general theory is, at first sight, too restrictive in the sense that too many conditions are necessary; however this example and the others developed show that these conditions are (relatively) easy to satisfy. This theory can be applied perfectly to other spaces; there are a lot of illustrative and rich
examples to study.

In this work we focused in the canonical qpc, just because it is the {\it non--commutative counterpart} of the principal connection induced by the Levi--Civitta connection; nevertheless, there is no problem in considering other qpcs.

In terms of a physical interpretation, {\it this space models left space--time scalar matter fields and right space--time scalar antimatter fields coupled to a magnetic monopole}. Since the spectrums of $\nabla^{\omega^\c\,\star_\l}_n\nabla^{\omega^\c}_n$ and $\widehat{\nabla}^{\omega^\c\,\star_\r}_n\widehat{\nabla}^{\omega^\c}_n$ are discrete, the eigenvalues could be interpreted as {\it quantum numbers}.

Finally, it is worth mentioning that by considering all the fixed elements of  \cite{l} instead of ours (for example $\omega_\pm$ instead of $\eta_\pm$), the operator $\nabla^{\omega^\c\,\star_\l}_n\nabla^{\omega^\c}_n$ is exactly the gauge Laplacian operator shown in \cite{l} and all its results can be reproduced.


\begin{thebibliography}{99999}




 \bibitem[BM]{bm}
  \textsc{Brzezinski, T \& Majid, S.~:}\quad
  \textit{Quantum Group Gauge Theory on Quantum spaces,\ }
  \textrm{Commun. Math.
Phys. {\bf 157},  591--638 (1993). Erratum: Commun. Math. Phys. 167--235 (1995).}
%
 \bibitem[D1]{micho1}
  \textsc{Durdevich, M.~:}\quad
  \textit{Geometry of Quantum Principal Bundles I,\ }
  \textrm{Commun. Math.
Phys {\bf 175} (3), 457-521 (1996).}
%
 \bibitem[D2]{micho2}
  \textsc{Durdevich, M.~:}\quad
  \textit{Geometry of Quantum Principal Bundles II,\ }
  \textrm{Rev. Math. Phys.~{\bf 9} (5), 531---607 (1997).}
%
 \bibitem[D3]{micho3} Durdevich, M.~:
  \textit{Differential Structures on Quantum Principal Bundles,\ }
  \textrm{Rep. Math. Phys. {\bf 41} (1),
91-115 (1998).} 
%
 \bibitem[HM]{qvbH} Hajac, P \& Majid, S.~:
  \textit{Projective Module Description of the $q$--monopole,\ }
  \textrm{arXiv:math/9803003v2, 30 Aug 1998.} 
%
 \bibitem[KS]{m}
  \textsc{Klimyk, A. \& Schmudgen, K.~:}\quad
  \textit{Quantum Groups and their Representations,\ }
  \textrm{Springer, 1997.} 
 %
\bibitem[K]{ku} \textsc{Kuwabara,R.~}:\quad
  \textit{On Spectra of the Laplacian on Vector Bundles,\ }
  \textrm{J. Math. Tokushima Univ., {\bf 16}, 1-23 (1982).}
%
\bibitem[La]{l} \textsc{Landi, G.~}:\quad \textit{Twisted Sigma--Model Solitons on the Quantum Projective Line,} \textrm{Lett. Math. Phys., {\bf 108} (8), 1955--1983 (2018).}                                               
 %
\bibitem[LRZ]{lrz} \textsc{Landi, G.; Reina, C. \& Zampini, A.~}:\quad \textit{Gauge Laplacians on Quantum Hopf Bundles,} \textrm{Commum. Math. Phys., {\bf 287}, 179--209 (2009).} 
 %
 \bibitem[LZ]{lz}
  \textsc{Landi, G.\& Zampini, A.~:}\quad
  \textit{Calculi, Hodge Operators And Laplacians on a Quantum Hopf Fibration,\ }
  \textrm{Rev. Math. Phys.~{\bf 23} (6), 575---613 (2011).}    
%
\bibitem[P]{pod} Podles, P., {\it Quantum Spheres}, Lett Math Phys 14 193-202 (1987).  
%
  \bibitem[Sa1]{sald1}
  \textsc{Salda\~na, M, G, A.~:}\quad
  \textit{Geometry of Associated Quantum Vector Bundles and the Quantum Gauge Group.\ }
  \textrm{arXiv:2109.01550v2, 1 Dec 2021.}
      %
  \bibitem[Sa2]{sald2}
  \textsc{Salda\~na, M, G, A.~:}\quad
  \textit{Quantum Principal Bundles and Yang--Mills--Scalar--Matter Fields.\ }
  \textrm{arXiv:2109.01554v2, 1 Dec 2021.}
        %
  \bibitem[Sa3]{sald3}
  \textsc{Salda\~na, M, G, A.~:}\quad
  \textit{Yang--Mills--Scalar--Matter Fields in the Two--Point Space.\ }
  \textrm{arXiv:2112.00647v1, 1 Dec 2021.}  
%
 \bibitem[So]{stheve}
  \textsc{Sontz, S, B.~:}\quad
  \textit{Principal Bundles: The Quantum Case,\ }
  \textrm{Universitext, Springer, 2015.}
%
  \bibitem[W1]{woro1}
  \textsc{Woronowicz, S, L.~:}\quad
  \textit{Compact Matrix Pseudogroups,\ }
  \textrm{Commun. Math. Phys. {\bf 111}, 613-665 (1987).}
%
  \bibitem[W2]{woro2}
  \textsc{Woronowicz, S, L.~:}\quad
  \textit{Twisted $SU(2)$ Group. An Example of Non--Commutative Differential Calculus,\ }
  \textrm{Publ. RIMS, Kyoto Univ. {\bf 23} (1987), 117--181.}
  %
\bibitem[Z]{z} \textsc{Zampini, A.~}:\quad \textit{Warped Products and Yang-Mills Equations on Noncommutative Spaces,} \textrm{Lett. Math. Phys., {\bf 105} (2), 221--243 (2015).} 

\end{thebibliography}
\end{document}